\newcommand{\q}[1]{\langle #1\rangle}
\newcommand{\smallset}[1]{\{#1\}}
\let\phi\varphi
\let\sign=\sgn
\def\esssup{\operatorname{esssup}}
\def\eps{\varepsilon}
\def\Isymb{{\mathbh1}}
\newcommand{\I}[1]{\Isymb_{(#1)}}
\def\real{{\mathbb R}}
\def\card{{\mathbb N}}
\def\F{{\mathcal F}}
\def\G{{\mathcal G}}
\def\A{{\mathcal A}}
\let\cl\overline
\newcommand{\zfrac}[2]{\biggl(\frac{#1}{#2}\biggr)}
\newcommand{\PQ}{\mathbf{Q}}
\def\med{\operatorname{med}}
\def\eqinlaw{\stackrel{d}{=}}
\def\z{\mathfrak{z}}
\def\bF{\bar{\F}}
\def\bG{\bar{\G}}
\def\bTV{\bar{\TV}}
\def\bB{\bar{B}}
\def\bxi{\bar{\xi}}
\def\bE{\bar{E}}
\def\bS{\bar{S}}
\def\bsigma{\bar{\sigma}}
\def\bU{\bar{U}}
\def\bW{\bar{W}}
\def\bL{\bar{L}}
\newcommand{\TV}{\mathcal{V}} 
\newcommand{\cC}{\mathcal{C}} 
\newcommand{\tgo}{_{t \geq0}}
\newcommand{\eqref}[1]{(\ref{#1})}
\newtheorem{theorem}{Theorem}
\newtheorem{corollary}[theorem]{Corollary}
\newtheorem{lemma}[theorem]{Lemma}
\newtheorem{proposition}[theorem]{Proposition}
\newcommand{\fraca}[2]{{#1}/{#2}}
\begin{document}
\begin{frontmatter}

\title{The solution of the perturbed Tanaka-equation is pathwise
unique\thanksref{TT1}}
\runtitle{Perturbed Tanaka-equation}

\begin{aug}
\author[A]{\fnms{Vilmos} \snm{Prokaj}\corref{}\ead[label=e1]{prokaj@cs.elte.hu}}
\runauthor{V. Prokaj}
\affiliation{E\"otv\"os Lor\'and University}
\address[A]{Department of Probability Theory \\ \quad and Statistics\\
E\"otv\"os Lor\'and University\\
1117 Budapest, P\'azm\'any P. s\'et\'any 1/C\\
Hungary\\
\printead{e1}} 
\end{aug}
\thankstext{TT1}{Supported by the European Union and co-financed
by the European Social Fund (grant agreement no. TAMOP
4.2.1./B-09/1/KMR-2010-0003).}

\received{\smonth{4} \syear{2011}}
\revised{\smonth{8} \syear{2011}}

%
\begin{abstract}
The Tanaka equation $dX_t=\sign(X_t)\,dB_t$ is an example of a stochastic
differential equation (SDE)
without strong solution. Hence pathwise uniqueness does not hold for this
equation. In this note we prove that if we modify the right-hand side
of the equation, roughly speaking, with a strong enough additive
noise, independent of the Brownian motion $B$,
then the solution of the
obtained equation is
pathwise unique.
\end{abstract}

%
\begin{keyword}[class=AMS]
\kwd[Primary ]{60H20}
\kwd{60G44}
\kwd{60J65}
\kwd[; secondary ]{60J55}
\kwd{60H10}.
\end{keyword}
\begin{keyword}
\kwd{Stochastic differential equation}
\kwd{Tanaka-equation}
\kwd{pathwise uniqueness}.
\end{keyword}

\end{frontmatter}

\section{Introduction}\label{sec1}
Let $(\Omega,(\F_t)\tgo,\mathbf P)$ be a filtered probability space and
$B=(B^{(1)},B^{(2)})$ be a two-dimensional Brownian motion
in the filtration
$(\F_t)\tgo$.
In the simplest form we are interested in the uniqueness of the
solution for
the following equation:
%
%
%
\begin{equation}
\label{eqXU}
dX_t=\sign(X_t)\,dB^{(1)}_t+\lambda \,dB^{(2)}_t,
\end{equation}
where $\lambda\in\real$ is a constant, and
$\sign$ denotes the signum function taking $-1$ at zero, that is,
$\sign(x)=\I{x>0}-\I{x\leq0}$.
We call \eqref{eqXU} the perturbed Tanaka equation, and the statement
in title
reads as follows:
%
%
\begin{theorem}\label{thm1}
For $\lambda\neq0$ the solution of \eqref{eqXU}
is pathwise unique.
\end{theorem}

Actually we prove a more general statement than Theorem~\ref{thm1}.
For the sake of fluent
composition, we use the term \textit{strongly orthogonal
} for continuous local martingales
whose product is a local martingale, that is, for $M,N$ if $\q
{M,N}=0$. We say
that \textit{$N$ dominates $M$} if for some constant $c>0$ we have
$d\q
{M}\leq
cd\q{N}$. In\vadjust{\goodbreak} other words there is a process $Q$ (it can be chosen to be
predictable) such that $\q{M}_t=\int_0^t Q_s \,d\q{N}_s$ for all
$t\geq0$
and $\mathbf{P}(\forall s\geq0, 0\leq Q_s\leq c)=1$. A localized
version of this
notion, namely \textit{$N$ locally dominates $M$}, holds if this $Q$
is locally
bounded.
%
%
\begin{theorem}\label{thm2}
Let $M,N$ be continuous local martingales in $(\F_t)\tgo$. Assume
that $M$
and $N$ are strongly orthogonal 
and $N$ dominates $M$. Then, the solution of the
equation
%
%
%
\begin{equation}
\label{eqXMN}
dX_t=\sign(X_t)\,dM_t+dN_t
\end{equation}
is pathwise unique.
\end{theorem}

The interest in the uniqueness of the solution of this type of equation stems
from the search for the strong solution of the drift hiding problem. Weak
solution was given in~\cite{drift2009}, and the results of this paper
make it
possible to modify the construction to obtain a strong solution. It is
presented in the forthcoming paper~\cite{drift2009a}. It uses Theorem
\ref{thm2} as a main new ingredient.
Besides this particular application, we think that this problem is also
interesting in its own right.

By standard localization argument, we obtain the following:
%
%
\begin{corollary}\label{cor3}
Let $M,N$ be continuous local martingales in $(\F_t)\tgo$. Assume
that $M$
and $N$ are strongly orthogonal, 
and $N$ locally dominates~$M$. Then, the
solution of \eqref{eqXMN} is pathwise unique.
\end{corollary}

Another trivial extension is obtained by a measure change argument.
%
%
\begin{corollary}\label{cor4}
Let $M,N$ be continuous semimartingales in $(\F_t)\tgo$. Assume that for
each $T\geq0$ there is an equivalent probability measure $\PQ$ on $\F
_T$ such
that $(M_t)_{t\in[0,T]}$
and $(N_t)_{t\in[0,T]}$ are strongly orthogonal local martingales
under $\PQ$,
and $N$ locally dominates $M$. Then, the
solution of \eqref{eqXMN} is pathwise unique.
\end{corollary}

For the proof of pathwise uniqueness, one usually considers $X-X'$
where $X,X'$ are two processes satisfying the equation with the same driving
semimartingale and starting from the same initial value. Here it is not enough;
we also have to deal with $X+X'$. The next theorem essentially states the
uniqueness in terms of $U=(X-X')/2$ and $V=(X+X')/2$.
%
%
\begin{theorem}\label{thm3}
Assume that $U,V$ are
continuous, strongly orthogonal
local martingales
such that
%
%
%
\begin{equation}\label{eqUV}
dU_t=\I{|V_t|<|U_t|}\,dU_t,\qquad U_0=V_0=0.
\end{equation}
If $V$ dominates $U$, then $U$ is trivial, that is, identically zero.
\end{theorem}

Without domination the statement is not true in general. In
Section~\ref{secex} below, we construct a pair $(U,V)$
satisfying\vadjust{\goodbreak}
\eqref{eqUV} such that $U$ is nontrivial. 
By Remark~\ref{remequi} below, this example also
shows that strong orthogonality together with the almost sure absolute
continuity of $\q{M}$ with respect to $\q{N}$ is not enough in Theorem
\ref{thm2} and Corollary~\ref{cor3}. Hence the assumption that $N$ dominates
$M$ is essential. Moreover, it is possible to construct an example in
which $M$ is a Brownian motion, and the perturbation $N$ is such that its
quadratic variation is equivalent with the Lebesgue measure almost
surely, and
still the pathwise uniqueness does not hold for \eqref{eqXMN}.
Even if the perturbation $N$ is a Brownian
motion, one can construct a local martingale $M$ strongly orthogonal to $N$
such that the solution of \eqref{eqXMN} is not pathwise unique.
These claims are formulated as Theorem~\ref{thm22},~\ref{thm23} and
\ref{thm24} in Section~\ref{secex}.

We close the introduction with a remark on Theorem~\ref{thm1}. After
rearranging and conditioning on $B^{(2)}$, Theorem~\ref{thm1} says
that for
almost all sample path $w$'s of a Brownian motion, the solution of the next
equation is pathwise unique, hence strong:
%
%
%
\begin{equation}\label{eqXB}
dX_t=\sign(X_t+w_t)\,dB_t,\qquad X_0=0.
\end{equation}
Denote by $H\subset C[0,\infty)$, the set of those deterministic
functions $w$
for which the solution of \eqref{eqXB} is pathwise unique. Then $H$
is not
empty, and the above reasoning gives that it has full measure with
respect to the
Wiener measure on the path space. On the other hand, to construct one such
example not using randomness seems to be difficult. One possible reason
for it is
that $H$ might be small in the sense of category. So the natural question
arises, for which we do not know the answer: is the set $H$ meager,
that is, of
the first Baire category?

\section{Proofs}\label{sec2}
We prove Theorem~\ref{thm3} below, but first we show how to deduce Theorem
\ref{thm2} from Theorem~\ref{thm3}.
\begin{pf*}{Proof of Theorem~\ref{thm2} using Theorem~\ref{thm3}}
We have to show that if $X$ and $X'$ are two solutions
of \eqref{eqXMN}, such that $X_0=X'_0$, then $X=X'$.
We can assume that $X_0=X'_0=0$,
since up to the stopping time $\tau=\inf\{t>0\dvtx X_t=0\}$, the
solution is given by $X_0+\sign(X_0)M_t+N_t$.

So, we can assume that $X_0=X'_0=0$. As indicated in the remark before Theorem
\ref{thm3}, put $U_t=(X_t-X'_t)/2$. Then 
%
%
%
\begin{eqnarray}\label{eqZ}
U_t&=&\frac12\int_0^t\sign(X_s)-\sign(X'_s)\,dM_s\nonumber
\\[-8pt]
\\[-8pt]
&=&\int_0^t \I{X_sX'_s<0}\sign(X_s)\,dM_s=\int_0^t \I{X_sX'_s<0}\,dU_s.
\nonumber
\end{eqnarray}
We obtain \eqref{eqZ}, by observing that
\[
\sign(x)-\sign(x')=
\cases{\displaystyle
2\sign(x),&\quad if $xx'<0$,\cr\displaystyle
\sign(x)+1,&\quad if $x'=0$,\cr\displaystyle
-1-\sign(x'),&\quad if $x=0$,\cr\displaystyle
0,&\quad if $xx'>0$
}
\]
and 
%
%
%
\begin{equation}\label{eqX=0=X}
\int_0^t \I{X_s=0} \bigl(1+\sign(X'_s)\bigr)\,dM_s =\int_0^t \I
{X'_s=0}\bigl(1+\sign(X_s)\bigr)\,dM_s=0.
\end{equation}
To show
\eqref{eqX=0=X} put $\xi_t=\int_0^t \I{X'_s=0}(1+\sign(X_s))\,dM_s$,
and use
$\mathbf{E}(\xi^2_t)\leq\mathbf{E}(\q{\xi}_t)$ combined with
\[
\label{eqq=0}
\q{\xi}_t=\int_0^t \I{X'_s=0}\bigl(1+\sign(X_s)\bigr)^2\,d\q
{M}_s\leq4\int
_0^t \I{X'_s=0}\,d\q{X'}_s=0.
\]
The latter is an easy consequence of the occupation time formula. The other
part of \eqref{eqX=0=X} follows similarly, 
by changing the role of $X$ and $X'$.

We can observe that $X_tX_t'<0$ if and only if
$|X_t-X_t'|>|X_t+X_t'|$, that is,
$|U_t|>|V_t|$, where $V=(X+X')/2$. Hence equation \eqref
{eqZ} is
just another form of 
\eqref{eqUV}.
By definition,
\begin{eqnarray*}
\q{U}_t=\int_0^t \I{X_sX'_s<0}\,d\q{M}_s, \qquad
\q{V}_t=\int_0^t \I{X_sX_s'\geq0}\,d\q{M}_s+\q{N}_t.
\end{eqnarray*}
So $\q{U,V}=0$, that is, $U$ and $V$ are strongly orthogonal, and $V$ dominates~$U$.
By Theorem~\ref{thm3}, $2U=X-X'$ is identically zero, hence $X=X'$.
\end{pf*}
%
%
\begin{remark}\label{remequi}
Observe that any nontrivial example to \eqref{eqUV} can produce an
example showing that the solution of the corresponding perturbed Tanaka
equation is not pathwise unique. Indeed take strongly orthogonal $U,V$
such that \eqref{eqUV} holds and $U$ is not identically zero.
Define
\begin{eqnarray*}
X&=&V+U, \qquad
X'=V-U, \\
Y_t&=&\int_0^t \I{|V_s|\geq|U_s|}\,dV_s, \qquad
W_t=\int_0^t \I{|V_s|< |U_s|}\,dV_s.
\end{eqnarray*}
By enlarging the probability space, one can assume that $Y_t=\xi_t+\xi'_t$,
where $\xi$ and $\xi'$ are strongly orthogonal continuous local
martingales and
$\q{\xi}=\q{\xi'}$. To see this take the DDS Brownian motion $B$ of
$Y$ and
a Brownian motion $B'$ independent from the original $\F_\infty$, and write
$\xi_t=\frac12 
(B+ B')_{\q{Y}_t}$, $\xi'_t=\frac12
(B-B')_{\q{Y}_t}$.
With this choice $U,W,\xi,\xi'$ are pairwise strongly orthogonal.

Finally let
\[
N=W+\xi'\quad\mbox{and}\quad M_t=\int_0^t \sign(X_s)(dU_s+d\xi_s).
\]

The point here is that by \eqref{eqUV},
\[
dU_t=\I{X_tX'_t<0}\sign(X_t)\,dM_t\quad\mbox{and}\quad
d\xi_t=\I{X_tX'_t\geq0}\sign(X_t)\,dM_t,
\]
since $XX'<0$ exactly when $|U|>|V|$.
Hence
\[
dX_t=dW_t+d\xi'_t+d\xi_t+dU_t=dN_t+\sign(X_t)\,dM_t.
\]
Note also that the calculation leading to \eqref{eqX=0=X}, and finally
\eqref{eqZ}, applies with the current definition of $M$, $X$ and
$X'$, since
both $X$ and $X'$ dominate $M$. Hence
\[
\bigl(\sign(X_t)-\sign(X'_t)\bigr)\,dM_t=\I{X_tX'_t<0}\sign(X_t)\,
dM_t=2\,dU_t,
\]
and
\[
dX'_t=dX_t-2\,dU_t=dN_t-\sign(X'_t)\,dM_t.
\]
%
That is, both $X$ and $X'$ solves
\eqref{eqXMN}. Moreover, $N$ dominates $M$ exactly when $V$ dominates $U$,
since
%
%
%
\begin{equation}
\label{eqqNM}
\q{N}=\q{V},\qquad
\q{M}=\q{U}+\q{Y},
\end{equation}
and $V$ dominates $Y$ by definition.
\end{remark}

\subsection{\texorpdfstring{Outline of the proof of Theorem \protect\ref{thm3}}
{Outline of the proof of Theorem 5}}\label{sec21}
In the previous remark we already defined $Y,W$ as
%
%
%
\begin{equation}\label{eqYWdef}
Y_t=\int_0^t \I{|V_s|\geq|U_s|}\,dV_s, \qquad
W_t=\int_0^t \I{|V_s|< |U_s|}\,dV_s.
\end{equation}
Assume that \eqref{eqUV} holds. Then the key feature of $Y$ and
$(U,W)$ is
that they cannot change ``simultaneously.''
One of the simplest examples for two continuous martingales without simultaneous
moving is used in one of the proofs of the arcsine law; see, for
example, Theorem 2.7 of
Chapter VI on page 242 of \cite
{Yor}.
In this proof one splits the Brownian motion $B$ with the formula
\[
B^{+}_t=\int_0^t \I{ B_s>0}\,dB_s,\qquad B^-_t=\int_0^t -\I
{B_s<0}\,dB_s,
\]
and exploits the fact that the two processes $B^+$ and $B^-$ are
linked to each other through the local time of $B$ at level zero, that is,
\[
\inf_{s\leq t}B^{+}_s=\inf_{s\leq t} B^{-}_s =-\frac{1}{2}L^0_t(B)
\qquad
\mbox{for
all $t\geq0$.}
\]
It means that the excursions of $B^+$ and $B^-$ from their running minimum
are interlaced. Heuristically, after each excursion of $B^{+}$ the
value of
the running minimum process decreases with an infinitesimal value. Before
these infinitesimal decrements sum up to a visible change, $B^{-}$
performs some
excursions as well, so the running minimum processes remain synchronized.

Now suppose, contrary
to Theorem~\ref{thm3}, that we have a
nontrivial pair $(U,V)$ of strongly orthogonal, continuous local martingales
satisfying \eqref{eqUV}. Then, similarly as in the
above example, $Y$ and $(U,W)$ are ``linked'' to each other, although the
situation is somewhat more complex.
To describe this link
take the random sets
\[
A^{+}=\{t\dvtx|V_t|>|U_t|\},\qquad
A^{-}=\{t\dvtx|V_t|<|U_t|\}.
\]

Say, $(\sigma,\tau)$ is a connected component of $A^+$. Then $(U,W)$ is
constant on $(\sigma,\tau)$ while the process $Y$ takes a move.\vadjust{\goodbreak}
Then $Y$ stays on one side of $Y_\sigma$, and at the end of the interval,
that is, at $\tau$, it returns to the starting value of the excursion,
that is,
$Y_\tau=Y_\sigma$.

The other case is when $(\sigma,\tau)$ is a component of $A^-$. Then
$Y$ is
constant, and $(U,W)$ makes a move. Since for $t\in(\sigma,\tau)$
we have $|Y_t+W_t|<|U_t|$, the two-dimensional process
$(U,W)$ moves
in the interior of a ``double cone'' until it reaches
the boundary. To be precise this double cone is $C(-Y_\sigma)$, where
\[
C(y)=\{(u,w)\in\real^2\dvtx y-|u|\leq w \leq y+|u|\}.
\]

The best way to think of the above is that the two-dimensional process $(U,W)$
moves in the plane under the constraint that it can not leave the (moving)
double cone
$C(-L_t)$,
where $L_t=Y_{\sigma(t)}$ the value of $Y$ at the last time epoch when
$|Y+W|=|U|$. When $(U,W)$ hits the boundary of $C(-L_t)$, it
has to wait
until the change in $L_t$ enables it to move.

Recall that this is similar to
the way $B$ is obtained from $B^+$ and $B^-$. In the case of $B$, the constraint
is that $B^{+}$ must be in the moving half line $\{x\in\real\dvtx
x\geq\inf_{s\leq t} B^{-}_s\}.$ Since there is a one-sided
condition, both
processes have only excursions from the running minimum.

By similar reasoning,
when $(U,W)$ hits the polyline $\{(u,w)\in\real^2\dvtx w=-L_t+|u|\}$, then
$-L$ is locally increasing, as $(U,W)$ pushes the double cone $C(-L_t)$ upward
on the plane. Actually, $L$ locally follows the running minimum
of $Y$, and as in the case of $B^\pm$, the changes in $L$ can be
described as
the changes of a local time process; see Lemma~\ref{lemTV} below.

The other case,
that is, when $(U,W)$ hits the polyline $\{(u,w)\in\real^2\dvtx
w=-L_t-|u|\}$
differs only in the direction of changes. In this regime, $(U,W)$ tries
to push
downward the cone on the plane, and therefore $-L$ is decreasing. Then $Y$
performs excursions below the
actual value of $L$, and $L$ locally follows the running maximum of $Y$.

The above reasoning is made precise in Lemma~\ref{lemTV} and yields
that $L$
is a linear combination of local time processes, whence it has a continuous
sample path with a locally bounded variation.

The end of our argument is that immediately after the
moment that $U$ leaves the origin, the total variation of $L$
becomes infinite. Since $L$ has locally bounded variation,
this clearly implies that $U$
is identically zero and, in other words, is trivial.

To do this last step, we only use that under the assumptions of Theorem~\ref{thm3}
the local martingales $U,W$ are strongly orthogonal, $W$
dominates $U$ and $(U_t,W_t)$ remains in the double cone $C(-L_t)$ for all
$t$, that is, $W-|U|\leq-L \leq W+|U|$.
To fix ideas let us discuss here the simplest
case; that is, assume that $(U,W)$ is a two-dimensional Brownian
motion, and $L$ is
continuous process such that $W-|U|\leq-L \leq W+|U|$.
Denote by $\TV_t$ 
the total variation of $L$ on $[0,t]$.
Next we give the reason why $\TV_t$ becomes infinite immediately after
starting.

During each excursion of $|U|$ away from zero, the process $\TV$
increases. Take one such excursion which is performed on the time interval
$I=[s,t]$. Then $-L_s=W_s$ and $-L_t=W_t$ since $U_s=U_t=0$. The
increment of $\TV$ on $I$ can be estimated as
$\TV_t-\TV_s\geq|L_t-L_s|=|W_t-W_s|$. Here
$(W_t-W_s)/\sqrt{t-s}$ is
a standard normal variable, by the
independence of $U$ and $W$.
Moreover, if we take the usual measurable
enumeration of the excursions, then the corresponding normal variables are
independent of each other and also of $U$. Hence we have a lower bound for
$\TV_t$ in the form
%
%
%
\begin{equation}\label{eqsum}
\sum_n \sqrt{|I_n|}|\eta_n|,
\end{equation}
where $\{I_n\dvtx n\geq0\}$ is the enumeration of excursion intervals ending
before~$t$, and the variables $|\eta_n|$ are i.i.d., with
positive expectation,
independent of the sequence $|I_n|$.
By a characterization of Brownian local time we have $\sum_n
\sqrt{|I_n|}=\infty$ a.s.,
and this implies immediately that \eqref{eqsum} is also almost surely
infinite. This shows that $\TV_t=\infty$ for $t>0$.

With some modification the above reasoning also applies to $U,W$ and
$L$ in
the general case.

\subsection{\texorpdfstring{Details of the proof of Theorem \protect\ref{thm3}}
{Details of the proof of Theorem 5}}\label{sec22}
Throughout this section, for $t\geq0$ put
\[
\sigma(t)=\sup\{s\in[0,t]\dvtx|U_s|=|V_s|\}.
\]
$\sigma(t)$ is the last point before $t$
where $|V|=|U|$ holds. The process $\sigma$
is increasing, right continuous and adapted.
It starts at zero, since by assumption $U_0=V_0=0$.

Next, $Y,W$ are defined by formula \eqref{eqYWdef} and $L$ by
%
%
%
\begin{equation}\label{eqLdef}
L_t=Y_{\sigma(t)}.
\end{equation}
The reasoning outlined in the preceding section is accomplished by
proving two
lemmas below. Lemma~\ref{lemTV} gives that
$L$ has continuous sample path with locally bounded variation. Lemma
\ref{lemL} applies to $L$ by Proposition~\ref{proporder} and
formalizes the
argument at the end of the heuristic argument. It shows that the assumption
that $U$ is not identically zero would lead to a contradiction proving Theorem
\ref{thm3} completely. The proof of Lemma~\ref{lemL} uses two more
proposition and a slight addition to Knight's theorem; see Lemma
\ref{lemknight}.

%
%
\begin{lemma}\label{lemTV} Let $U,V$ be continuous semimartingales
satisfying \eqref{eqUV} and
$L$ as above. Then $L$ is a linear combination of local time processes, and
hence it is of bounded variation on compact intervals. To be precise,
\[
2L_t=L^0_t(|U|+V)-L^0_t(|U|-V),
\]
where $L^x(X)$ denotes the local time process of $X$ at level $x$.
\end{lemma}
\begin{pf} Put $\xi=\med(V+U,V-U,0)$, where med denotes the median
of its three argument. Then $\xi_t$ follows the\vadjust{\goodbreak} trajectory of $V+U$ if
it is
in the middle, that is, when $UV<0$ and $|V|>|U|$. It
follows the
changes of $V-U$ when $UV>0$ and $|V|>|U|$ and stays at zero when
$|V|<|U|$. When $\xi$ switches between the above regimes, the
corresponding local time process increases. So apart form the local time
changes,
$\xi_t$ follows the changes in $Y$, since the other two processes
$W,U$ are
locally constant on $\{t\dvtx|V_t|>|U_t|\}$.

We obtained that $\xi_t=Y_t-L'_t$, where
$L'_t$ is from the local time components. Now if $\sigma(t)=t$, that
is, 
$|V_t|=|U_t|$ then we have $\xi_t=0$. Hence
$L_t=Y_{\sigma(t)}=L'_{\sigma(t)}$. This gives that $L$ is of locally
bounded variation.

To carry out this program observe that
%
%
%
\begin{eqnarray}\label{eqxi}
\xi_t&=&\med(V+U,V-U,0)
=(V_t+|U_t|)\wedge0 +(V_t-|U_t|)\vee0 \nonumber
\\[-8pt]
\\[-8pt] &=&
(|U_t|+V_t)\wedge0-(|U_t|-V_t)\wedge0.
\nonumber
\end{eqnarray}

For the first term, the Tanaka formula gives that
\[
d(|U|_t+V_t)\wedge0= 
\I{V_t\leq-|U_t|}\,d(|U_t|+V_t)-\tfrac{1}{2}\,d L^{0}_t(|U|+V).
\]
Note that since $U,V$ satisfies \eqref{eqUV},
and the support of
$dL^0_t(U)$ is the null set of $U$, the right-hand side
simplifies to
\[
d(|U|_t+V_t)\wedge0= 
\I{V_t \leq-|U_t|}\,dV_t+ \I{V_t \leq0}\,dL^0_t(U)-\tfrac{1}{2}\,d
L^0_t(|U|+V).
\]
Similar calculation for the second term in \eqref{eqxi} yields
\[
d(|U_t|-V_t)\wedge0=
-\I{V_t \geq|U_t|}\,dV_t+ \I{V_t \geq0}\,dL^0_t(U)-\tfrac{1}{2}\,d
L^0_t(|U|-V).
\]
Hence
\[
d\xi_t=\I{|V_t|\geq|U_t|}\,dV_t-
\sign_0(V_t)\,dL^0_t(U)+
\tfrac{1}{2}\,d L^{0}_t(|U|-V)-\tfrac{1}{2}\,d L^0_t(|U|+V),
\]
where $\sign_0=\I{x>0}-\I{x<0}$.

The first term on the right is simply $dY_t$ by definition.
The support of $dL^0(U)$ is a subset of $\{t\geq0\dvtx V_t=U_t=0\}$,
since on the components of its complement either $U$ is nonzero or $U$
is locally constant. Hence the second term on the right is zero.

After these simplifications, using that $\xi_0=0$, we obtain
\begin{eqnarray*}
\xi_t&=&Y_t-L'_t ,\\
L'_t&=&\tfrac{1}{2}L^0_t(|U|+V) -\tfrac{1}{2}L^{0}_t(|U|-V).
\end{eqnarray*}
To finish the proof use that $\xi_{\sigma(t)}=0$ for all $t\geq0$;
that is,
$L_t=L'_{\sigma(t)}$ and
that $(\sigma(t),t)$ is disjoint from the support of all the involved local
time processes, and hence $L'_t=L'_{\sigma(t)}$.
\end{pf}

Note that the formula, obtained for $\xi$, is the special case of
the general formula for ranked semimartingales 
proved recently in~\cite{MR2428716}.

%
%
\begin{proposition}\label{proporder}
Let the continuous semimartingales $U,V$ satisfy \eqref{eqUV} and $L,W$
defined by \eqref{eqYWdef} as
above. Then $|L_t+W_t|\leq|U_t|$ for all $t\geq0$.\vadjust{\goodbreak}
\end{proposition}
\begin{pf}By definition at $s=\sigma(t)$ we have
$|L_s+W_s|=|U_s|$. It is enough to consider the case when $s<t$,
since otherwise we are done. On the interval $(s,t]$ either
$|V|>|U|$ or $|V|<|U|$.
In the first case,
$W$, $U$ and $L$ are constant on $[s,t]$,
and we get the statement with equality. In the
second case, $Y$ is constant on $[s,t]$;
hence $L_t=Y_t$, and the statement follows, since then
$|L_t+W_t|=|Y_t+W_t|=|V_t|<|U_t|$.
\end{pf}

%
%
\begin{lemma}\label{lemL}Let $U$ and $W$ be strongly orthogonal continuous
local martingales starting from zero. Assume that $W$ dominates $U$, and
for the continuous process $L$, we have $|L_t+W_t|\leq|U_t|$ for
$t\geq0$.

Then, the total variation process $(\TV_t)\tgo$ of $L$ satisfies
%
%
%
\begin{equation}\label{eqlemL}
\TV_t
=
\cases{\displaystyle
0,&\quad if $U_s=0$ for $s\leq t$,\cr\displaystyle
\infty,&\quad otherwise.
}
\end{equation}
That is, immediately after $U$ leaves the origin, $\TV$
becomes infinite.
\end{lemma}
\begin{remark*}
By enlarging the probability space if necessary, we may assume that
both $U$
and $W$ are divergent martingales. Indeed, enlarge a probability space
with a
two-dimensional Brownian motion $B=(B^{(1)},B^{(2)})$, independent of
$\F_\infty$. Fix a $T>0$, and define $\bU,\bW$ and a new filtration
$(\bF_t)\tgo$ with the formulas
\begin{eqnarray*}
\bF_t&=&\F_t\vee\F^B_t,\\
\bU_t&=&U_{t\wedge T}+B^{(1)}_t-B^{(1)}_{t\wedge T},\\
\bW_t&=&W_{t\wedge T}+B^{(2)}_t-B^{(2)}_{t\wedge T}.
\end{eqnarray*}
Now we can define $\bL$ to satisfy the assumption of Lemma \ref
{lemL} in many
ways. One possibility is to define $\tau$ be the first time after $T$ when
$|L+\bW|$ meets $|\bU|$. Up to $\tau$ the process $\bL$
is the same as
the stopped process $L_{t\wedge T}$. After $\tau$, the process $\bL$ follows
the changes of
either $-\bW-|\bU|$ or
$-\bW+|\bU|$ according to which hits before the level $L_T$.
Formally one
could define $\bL$ as
\begin{eqnarray*}
\xi^\pm_t&=&-\bW_t\pm|\bU_t|,\\
\tau^\pm&=&\inf\{t\geq T\dvtx L_T=\xi^\pm_t\},\\
\bL_t&=&L_{t\wedge T}+ \I{\tau^+<\tau^-}(\xi^+_t-\xi^+_{t\wedge
\tau^+})+
\I{\tau^+\geq\tau^-}(\xi^-_t-\xi^-_{t\wedge\tau^-}).
\end{eqnarray*}
Using the independence of $B$ and $\F_\infty$, it follows that $\bU$ and
$\bW$ are orthogonal continuous local martingales in $(\bF_t)\tgo$. By
construction $\bU$, $\bW$ are divergent, $\bW$ dominates $\bU$, the process
$\bL$ has continuous sample paths and
$|\bL_t+\bW_t |\leq|\bU_t |$ almost surely for all $t$.

Now, if the statement of Lemma~\ref{lemL} holds for the triple
$(\bL,\bU,\bW)$,
then it
also holds for $(L,U,W)$,
provided that $t$ in \eqref{eqlemL} is smaller
than $T$. Since $T>0$ was arbitrary, the Lemma follows from the special case
when $U$ and $W$ are divergent.
\end{remark*}
\begin{notation*} To shorten formulas, we use 
$\Delta_I X$ for the change of the process $X$ on the interval $I$.
\end{notation*}
\begin{pf*}{Proof of Lemma~\ref{lemL}}
According to the previous remark, we may and do assume that both
$U$ and $W$ are divergent.
For $\eps>0$, let
$\tau(\eps)=\tau(U,\eps)=\inf\{t>0\dvtx L^0_t(U)>\eps\}$.
Since $U$ is a divergent local
martingale, $\tau(\eps)$ is finite almost surely. Clearly it is
enough to show
that $\TV_{\tau(\eps)}=\infty$ for any fixed $\eps>0$.
In the first part of the proof, we fix a ``typical''
$\omega\in\Omega$, but in the notation it is suppressed.

Let $\z=\{t\dvtx U_t=0\}$ denote the null set of $U$.
Also, let $\cC$ denote the collection of connected components of
$\{t\dvtx U_t\neq0\}$ and $\cC(\eps)=\{I\in\cC\dvtx I\subset
[0,\tau(\eps)]\}$.
Since $U$ is divergent,
for a typical $\omega$, that is,
with probability one, $\cC$ and $\cC(\eps)$ has infinitely many elements.

Next, since $W$ dominates $U$, there is a $c>0$ such that $d\q
{U}_t\leq
cd\q{W}_t$,
that is,
the increase of $\q{W}$ on any interval $I$ is at least
$\Delta_I\q{U}/c$. Hence,
\[
\gamma_{(a,b)}=\inf\biggl\{t\geq a\dvtx\q{W}_t-\q{W}_a=\frac{\q
{U}_b-\q{U}_a}{2c}\biggr\}
\]
defines a time-point in $(a,b)$.

If $I=(a,b)\in\cC$, then $U_a=0$ and $L_a
=-W_a$ by assumption.
Also by our assumption,
$|L_s+W_s|\leq|U_s|$ for $s=\gamma_{(a,b)}$, hence by the triangle
inequality,
\[
\Delta_I \TV\geq|L_s-L_a|\geq|W_s-W_a|-|U_s|\geq
|W_s-W_a|-\sup_{u\in(a,b)}|U_u|.
\]

This gives
\[
\TV_{\tau(\eps)}\geq\sum_{I\in\cC(\eps)} 
(\Delta_I\q{U})^{1/2}
(|\xi_I|-\eta_I)^+,
\]
where $(x)^+=0\vee x$ is the positive part of $x$ and for $I=[a,b]$
\begin{eqnarray*}
\label{eqx}
\xi_{I}=\frac{1}{(\Delta_I\q{U})^{1/2}}(W_{\gamma_I}-W_a)
, \qquad
\eta_{I}=\frac{1}{(\Delta_I\q{U})^{1/2}}\sup_{s\in I}|U_s|.
\end{eqnarray*}

We claim the following:
%
%
\begin{proposition}\label{lemindep}
There is a measurable enumeration of the random collection of intervals
$\cC(\eps)=
\{I_n\dvtx n\geq1\}$
such that
$(\xi_{I_n},\eta_{I_n})$, $n\geq1$ is an i.i.d. sequence independent of
$\A=\sigma(\{\Delta_{I_n}\q{U}\dvtx n\geq1\})$.
Moreover, $\mathbf{E}((|\xi_{I_n}|-\eta_{I_n})^+)$ is positive
and finite.
\end{proposition}

%
%
\begin{proposition}\label{lemsum}
\[
\sum_{I\in\cC(\eps)} 
(\Delta_I\q{U})^{1/2}=\infty\qquad\mbox{almost surely.}
\]
\end{proposition}

The end of the proof is then rather straightforward. For independent
nonnegative random variables\vadjust{\goodbreak} $X_1,X_2,\ldots,$ the sum $\sum_n X_n$ is
finite if and only if $\sum_n \mathbf{E}(X_n\wedge1)<\infty$; see
Proposition 3.14 of
\cite{Kallenberg}. When $\mathbf{E}(X_n)<\infty$ for all $n$ and $X_n/\mathbf{E} (X_n)$ is an i.i.d.
sequence the
truncation can
obviously be dropped.
Thus, conditioning first on $\A$, we can apply
this result to
$X_n=(\Delta_{I_n}\q{U})^{1/2}(|{\xi_{I_n} }|-\eta
_{I_n})^+$, by
Proposition~\ref{lemindep}. Since by Proposition~\ref{lemsum},
$\sum_n
\mathbf{E}(X_n|\A)=\infty$ almost surely, the lower bound for
$\TV_{\tau(\eps)}$ is infinite almost surely.
\end{pf*}

Proposition~\ref{lemindep} is probably the most delicate part of the
proof. It is based on a slight extension to Knight's theorem, Lemma
\ref{lemknight}. For a divergent continuous local martingale $M$
starting at
zero, we say that $\beta$ is the DDS Brownian motion of $M$ if
$\beta_t=M_{\rho(t)}$ where
\[
\rho(t)=\inf\{s>0\dvtx\q{M}_s>t\}.
\]
Then $\beta$ is a Brownian motion; see Chapter V in~\cite{Yor}.

To prove Proposition~\ref{lemindep} we use the next statement whose
proof is deferred to the end of the section.
%
%
\begin{lemma}\label{lemknight}
Let $M,N$ be divergent, continuous local martingales in the
filtration
$(\F)\tgo$. Assume that $M$ and $N$ are strongly orthogonal. Denote
$\beta$
the DDS Brownian--motion of $N$. Then $M$ is a local martingale in the
filtration $(\bF_t)\tgo$, where $\bF_t=\bigcap_{s>t}(\F_s\vee
\sigma
(\beta))$.
\end{lemma}

\begin{pf*}{Proof of Proposition~\ref{lemindep}}
Denote by $\beta$ the DDS Brownian motion of $U$, let
$\bF_t=\bigcap_{s>t}(\F_s\vee\sigma(\beta))$. Then by Lemma
\ref{lemknight} the process $W$ is a local martingale in the larger
filtration $\bF$ as well.

Let $\z(\beta)$ be the null set of $\beta$, and denote by $\cC
(\beta)$ the connected
components of the complement of $\z(\beta)$ and
$\cC(\beta,\eps)=\{I\in\cC\dvtx I\subset[0,\tau(\beta,\eps)]\}
$ where
$\tau(\beta,\eps)=\inf\{t>0\dvtx L^0_t(\beta)>\eps\}$. Besides, let
$\sigma(\z(\beta))=\sigma\{C_{s,t}\dvtx0\leq s\leq t\}$ the smallest
$\sigma$-algebra containing the events $C_{s,t}=\smallset{[s,t]\cap
\z(\beta)=\varnothing}$.

Then we define the enumeration of
$\cC(\eps)$ based on the usual $\sigma(\beta)$ measurable enumeration
$\{J_n\dvtx n\geq1\}$ of $\cC(\beta,\eps)$.
Indeed, $J_n=(a_n,b_n)$ with some
$\sigma(\beta)$ measurable random time $a_n,b_n$; then let
$I_n=(\rho(a_n),\rho(b_n))$, where $\rho(t)=\inf\{s>0\dvtx\q
{U}_s>t\}$.
The point here is that the random times $\rho(a_n),\break\rho(b_n), \gamma
_I$ are
stopping times in the filtration $(\bF_t)\tgo$.

This implies that for any finite collection $F\subset\card$ the random
variables $\{\xi_{I_n}\dvtx n\in F\}$ are independent also from each
other and
of $\bF_0$. To see this, we can define the simple $\bF$-predictable process
\[
H_t=\sum_{n\in F} \frac{\alpha_n}{({\Delta_{I_n}\q
{U}})^{1/2}} \I{\rho(a_n)<t\leq
\gamma_{I_n}}
\]
with
$\alpha_n\in\real$. Then $H\cdot W$ has uniformly bounded quadratic
variation $\q{H\cdot W}_\infty=\sum_{n\in F} \alpha_n^2/2c$,
which is deterministic.
Using that $\operatorname{exp}\{i H\cdot W +\frac{1}{2}\q{H\cdot W}\}
$ is a bounded
martingale, we
get $\mathbf{E}(\operatorname{exp}\{i (H\cdot W)_\infty+\frac
{1}{2}\q{H\cdot W}_\infty\}|\bF_0)=1$. This
yields the joint conditional characteristic function of
$\{\xi_{I_n}\dvtx n\in F\}$, given $\bF_0$
\[
\mathbf{E}\biggl(\operatorname{exp}\biggl\{i\sum_{n\in F}\alpha_n\xi
_{I_n}\biggr\}\Big|\bF
_0\biggr)=
\operatorname{exp}\biggl\{-\frac12\sum_{n\in F} \frac{\alpha_n^2}{2c}\biggr\}.
\]
That is,
$\{\xi_{I_n}\dvtx n\geq1\}$ is an i.i.d. sequence which is
independent from
$\bF_0$, and the common law is normal with expectation 0 and variance $1/2c$.

$\eta_{I_n}$ is calculated from the normalized excursions of $\beta$
on $J_n$;
hence they form an i.i.d. sequence measurable with respect to
$\sigma(\beta)\subset\bF_0$ and independent of $\sigma(\z(\beta
))$; see
\cite{MR0345224}, Section 2.9.

Finally, $\Delta_{I_n}\q{U}$ is the length of $J_n$, and hence it is
$\sigma(\z(\beta))$ measurable.

Putting these pieces together, we obtain that $(\xi_{I_n},\eta
_{I_n})$ is an
i.i.d. sequence independent of $\sigma(\z(\beta))\supset\A$. The
claim that
$\mathbf{E}((|\xi_{I_n}|-\eta_{I_n})^+)>0$ and finite is
obvious from
the joint law of $(\xi_{I_n},\eta_{I_n})$.
\end{pf*}

\begin{pf*}{Proof of Proposition~\ref{lemsum}}
With the notation introduced in the proof of Proposition
\ref{lemindep}, we can reformulate the statement. Using $\beta$ the DDS
Brownian motion of $U$, we have to show that for $\eps>0$,
\[
\sum_{J\in\cC(\beta,\eps)}
|J|^{1/2}=\infty\qquad\mbox{almost surely},
\]
which follows from a characterization of the local time.

Indeed, let $n_k$ be the number
of intervals in $\cC(\beta,\eps)$ longer than $2^{-k}$.
Then the limit $\lim_{k\to\infty}2^{-k/2}n_k$ almost surely
exists and is positive; it is $\sqrt{2/\pi} L_{\tau(\beta,\eps
)}^0(\beta)$;
see, for example,~\cite{Yor}, Proposition (2.9), Chapter XII. From
this, the
statement follows using elementary analysis, since
\begin{eqnarray*}
 \sum_{J\in\cC(\beta,\eps)}
|J|^{1/2}&\geq&
\sum_{k=1}^\infty2^{-k/2} (n_k-n_{k-1}) \\
 &=&
-\frac{n_0}{\sqrt{2}}+(1-2^{-1/2})\sum_{k=1}^\infty2^{-k/2} n_k
=\infty.
\end{eqnarray*}
\upqed
\end{pf*}
%
%
\begin{remark}\label{remalpha}
With obvious modification the previous calculation also gives that for
$\alpha> 1/2$, we have $\sum_{J\in\cC(\beta,\eps)} |J|^{\alpha
}<\infty$
almost surely.
\end{remark}

\begin{pf*}{Proof of Lemma~\ref{lemknight}}
$M$ is a divergent continuous local martingale, denote $B$ its DDS Brownian
motion. That is $B_t=M_{\rho(t)}$ with the time-change $\rho$ associated
with the quadratic variation of $M$.
Then $B$ is a Brownian motion in the time-changed
filtration $(\G_t=\F_{\rho(t)})\tgo$, and $(\q{M}_t)\tgo$ is continuous
time-change in the filtration $(\G_t)\tgo$.

We actually show that $B$ is a martingale in the filtration
$(\bG_t)\tgo$, where $\bG_t= \G_t\vee\sigma(\beta)$; that is,
for $0\leq t\leq
s$, we have $\mathbf{E}(B_s-B_t|\bG_t)=0$.\vadjust{\goodbreak}

To see this, fix $t\geq0$, and observe first that the time-shifted processes
$(M_{\rho(t)+u}-M_{\rho(t)})_{u\geq0}$ and
$(N_{\rho(t)+u}-N_{\rho(t)})_{u\geq0}$ are divergent,
continuous local martingales in the time-shifted filtration
$(\F_{\rho(t)+u})_{u\geq0}$. Their DDS Brownian motions are given by
$(B_{t+s}-B_t)_{s\geq0}$ and $(\beta_{\eta(t)+s}-\beta_{\eta
(t)})_{s\geq0}$,
respectively, where $\eta(t)=\q{N}_{\rho(t)}$.

By Knight's theorem (see Theorem 1.9 of Chapter 5 in~\cite{Yor}), the processes
$(B_{t+s}-B_t)_{s\geq0}$ and $(\beta_{\eta(t)+s}-\beta_{\eta
(t)})_{s\geq
0}$ constitute
a two-dimensional Brownian motion in its own filtration and, with a little
extension of the original statement, independent of $\G_t=\F_{\rho(t)}$.
The independence follows from considering the conditional law given $\G_t$.

Next, note that
\[
\bG_t=\G_t\vee\sigma(\beta)=
\G_t\vee\sigma\bigl(\bigl\{\beta_{\eta(t)+s}-\beta_{\eta(t)}\dvtx
s\geq0\bigr\}\bigr),
\]
since $\eta(t)=\q{N}_{\rho(t)}$ is $\F_{\rho(t)}=\G_t$ measurable.

Then, the three $\sigma$-algebras:
$\A_1=\sigma(\{\beta_{\eta(t)+s}-\beta_{\eta(t)}\dvtx s\geq0\})$,
$\A_2=\sigma(\{B_{t+s}-B_{t}\dvtx s\geq0\})$ and $\G_t$ are
independent.
For $s\geq0$ this gives that $B_{t+s}-B_t$ is independent from
$\A_1\vee\G_t=\bG_t$, and $\mathbf{E}(B_{t+s}-B_t|\bG_t)=\mathbf
{E}(B_{t+s}-B_t)=0$, showing
that $B$ is not only a $\G$ Brownian motion, but also a $\bG$
Brownian motion.

Since $M$ is obtained from $B$ with a continuous $\bG$-time-change
$(\q{M}_t)\tgo$, it is a local martingale in the filtration
$
(\bG_{\q{M}_t})\tgo$ and also in its right continuous hull. Now
\[
\bG_{\q{M}_t}\supset\G_{\q{M}_t}\vee\sigma(\beta)\supset\F
_t\vee\sigma(\beta)
\]
finishes the proof.
\end{pf*}

\section{Examples, showing that domination is necessary}\label
{sec3}\label{secex}
The aim of this section is to show that we cannot drop the domination
condition in Theorems~\ref{thm2} and~\ref{thm3} completely.
It is enough to give an example showing that without domination, Theorem
\ref{thm3} does not hold, since by Remark~\ref{remequi}, it also
provides an
example for Theorem~\ref{thm2}.

First we describe $L$ in terms of $U,W$ in a way which is invariant under
time-change. This characterization is similar in spirit to the reflection
lemma of Skorohod.

\renewcommand\thelonglist{\roman{longlist}}
\renewcommand\labellonglist{(\thelonglist)}
%
%
\begin{lemma}\label{lemskorohod} Let $f,g,h\dvtx[0,\infty)\to\real
$ be
continuous
functions satisfying the following properties:
\begin{longlist}[(iii)]
\item\label{propLit1} $f(0)=h(0)=g(0)$;
\item\label{propLit2} $f\leq h\leq g$;
\item\label{propLit3} $h$ is locally nondecreasing on
$\smallset{g\neq h}$ and locally nonincreasing on
$\smallset{h\neq f}$. That is, for $s\leq t$ if $g\neq h$ on $(s,t)$, then
$h(s)\leq h(t)$, and if $h\neq f$ on $(s,t)$, then $h(s)\geq h(t)$.
\end{longlist}
Then
\[
h(t)=
F(d(t),t)=G(d(t),t),\vadjust{\goodbreak}
\]
where
\begin{eqnarray*}
F(s,t)&=&\max\{f(x)\dvtx x\in[s,t]\},\\
G(s,t)&=&\min\{g(x)\dvtx x\in[s,t]\},\\
d(t)&=&\sup\{s\leq t\dvtx F(s,t)\geq G(s,t)\}.
\end{eqnarray*}
\end{lemma}

In plain words, to calculate $h(t)$ go backward starting at $t$ on the
graph of $f$ and $g$ until there is common value in the
range swept by these functions. The first such value is $h(t)$.
%
%
%
We remark that with obvious modifications, Lemma~\ref{lemskorohod}
extends the explicit formula obtained in~\cite{MR2349573} for the two-sided
reflection map on $D[0,\infty)$.
%

\begin{pf*}{Proof of Lemma~\ref{lemskorohod}}
Define $t^g$ and $t^f$ the last time before $t$, when $g=h$ or $f=h$,
respectively; that is, $t^g=\max\{s\in[0,t]\dvtx g(s)=h(s)\}$ and
$t^f=\max\{s\in[0,t]\dvtx f(s)=h(s)\}$.
We can assume that $t^f\leq t^g$; the
other case is obtained by considering $-g\leq-h \leq-f$.

By our assumption (\ref{propLit3}) the function $h$ is
nonincreasing on $(t^f,t)$, and nondecreasing on $(t^g,t)$. 
Since $t^f\leq t^g\leq t$, we have that
$h(s)=h(t)=h(t^g)=g(t^g)$ for all $s\in[t^g,t]$ and also that
$h(t^f)=f(t^f)\geq h(s)\geq h(t)$ for $s\in[t^f,t]$.
Thus
\[
h(t)=\min_{s\in[t^f,t]}h(s) \leq G(t^f,t)\leq g(t^g)=h(t),
\]
that is, $h(t)=G(t^f,t)\leq F(t^f,t)$. By definition, $d(t)\geq t^f$.
On the other hand, $d(t)\leq t^g$ follows from the
fact that if $s\in(t^g,t)$, then $f(s)<h(t)<g(s)$.

Since $F(d(t),t)=G(d(t),t)$ by definition,
we obtain that $h(t)=G(t^f,t)\leq G(d(t),t)\leq G(t^g,t)=h(t)$
and $h(t)=F(d(t),t)=G(d(t),t)$.
\end{pf*}
%
%
\begin{corollary}\label{corh}
Let $f,g\dvtx[0,\infty)\to\real$ be continuous functions and assume that
$f(0)=g(0)$ and $f\leq g$. Then there is a unique continuous
function denoted by
$\bL(f,g)$ such that (\ref{propLit1}), (\ref{propLit2}) and
(\ref{propLit3}) holds for $f\leq h=\bL(f,g)\leq g$.
\end{corollary}
\begin{remark*}
The function $(t,f,g)\mapsto\bL_t(f,g)$ is clearly predictable; see
\cite{Yor}, Chapter IX, for definition.
\end{remark*}

%
%
\begin{corollary}\label{corbL}
Assume that $U,V$ satisfies \eqref{eqUV} and $L,W$ are defined as above.
Then $L_t=\bL_t(-W-|U|,-W+|U|)$.
\end{corollary}
\begin{pf}
By Proposition~\ref{proporder} $W-|U|\leq-L \leq W+|U|$
and by Lemma
\ref{lemTV}, $L$ is continuous, nonincreasing on $|U|+L+W\neq
0$ and
nondecreasing on $|U|-L-W\neq0$.
\end{pf}

Finally, we have the following result which will be proved below in Section~\ref{secproofl15}.
%
%
\begin{lemma}\label{lembZbW}
There is a two-dimensional local martingale $(\bU,\bW)$ on some filtered
probability space such that:
\begin{longlist}
\item\label{lembZbWit1} $\bU$ and $\bW$ are strongly orthogonal;
\item\label{lembZbWit2} $d\q{\bU}\approx d\q{\bW}$ almost
surely, that is,
the random measures induced by the changes of $\q{\bU}$ and $\q{\bW
}$ are
equivalent;
\item\label{lembZbWit3} $\bL=\bL(-\bW-|\bU|,-\bW+|\bU|)$
has locally bounded variation;
\item\label{lembZbWit4} $\bU$ and $\bL$ are divergent.
\end{longlist}
\end{lemma}

Let $(\bU,\bW)$ from Lemma~\ref{lembZbW} and
$\bL_t=\bL_t(-\bW-|\bU|,-\bW+|\bU|)$. 
Then $|\bL_t+\bW_t |\leq|\bU_t |$ for $t\geq0$. We can
assume that $\bW$
is a Brownian motion by applying an appropriate time-change; the proof
is actually formulated in this way. So assume for the moment that
$\q{\bW}_t=t$.
Observe that by Lemma~\ref{lemL} the Brownian motion $\bW$ cannot dominate
$\bU$ on any intervals of the form $[0,t]$, with $t>0$.
Since $\q{\bU}$ is equivalent with $\q{\bW}$, that is, with the Lebesgue
measure, we can write it a-
$\q{\bU}_t=\int_0^t Q_s\,ds $. The nondomination property means that
$\esssup_{s\in[0,t]} Q_s=\infty$ almost surely for all $t>0$.

%
%
\begin{proposition}\label{prop18}
$(\bU,\bL+\bW)$ fulfills \eqref{eqUV}, that is,
\[
d\bU_t=\I{|\bL_t+\bW_t|<|\bU_t|}\,d\bU_t,\qquad\bU
_0=\bW
_0=\bL_0=0.
\]
Moreover decomposition 
\eqref{eqYWdef} gives back $\bL$
and $\bW$, that is,
\[
\bL_t=\int_0^t \I{|\bL_s+\bW_s|\geq|\bU_s|} \,d(\bL
_s+\bW_s),\qquad
\bW_t=\int_0^t \I{|\bL_s+\bW_s|<|\bU_s|}\, d(\bL_s+\bW_s),
\]
and $\bL_{\bsigma(t)}=\bL_t$, where $\bsigma(t)=\sup\{s\leq
t\dvtx|\bU_s|=|\bL_s+\bW_s|\}$.
\end{proposition}
\begin{pf} Since $|\bL+\bW|\leq|\bU|$, to show that
$\bU,\bL+\bW$ satisfies \eqref{eqUV}, we only need that
%
%
%
\begin{equation}\label{eqxi=0}
\xi_t=\int_0^t \I{|\bL_s+\bW_s|=|\bU_s|}\,d\bU_s=0.
\end{equation}
This follows similarly as \eqref{eqX=0=X} above as
$\mathbf{E}(\xi^2_t)\leq\mathbf{E}(\q{\xi}_t)$ and the latter can
be estimated
using the
orthogonality of $\bU$ and $\bW$ by
%
%
%
\begin{equation}\label{eqbZ}
\qquad\int_0^t\I{|\bL_s+\bW_s|=|\bU_s|}\,d\q{\bU}\leq
\int_0^t\I{|\bL_s+\bW_s|=|\bU_s|}\,d\q{|\bL+\bW|-|\bU|}_s=0
\end{equation}
%
by the occupation time formula. The same applies if we integrate with
respect to $\bW$ in \eqref{eqxi=0}.
Thus
\[
\int_0^t \I{|\bL_s+\bW_s|\geq|\bU_s|}\,d(\bL_s+\bW_s)=
\int_0^t \I{|\bL_s+\bW_s|=|\bU_s|}\,d\bL_s=\bL_t.
\]
In the last step
we used that $\bL$ is locally constant on $\bU\neq\bL+\bW$;
cf. Property~(\ref{propLit3}) of Lemma~\ref{lemskorohod}.
This proves the first part of the decomposition formula.
The second part, that is, the formula for $\bW$, obviously follows.

Finally, $(\bsigma(t),t)\subset\{s\dvtx|\bU_s|\neq|\bL_s+\bW
_s|\}$,
hence $\bL$ is constant on $[\bsigma(t),t]$ and $\bL_{\sigma
(t)}=\bL_t$.
\end{pf}

Application of Lemma~\ref{lemTV} proves the next representation of
$\bL$.
%
%
\begin{corollary}\label{cor2L}
%
%
%
\begin{equation}\label{eq2L}
2\bL_t=L^0_t\bigl(|\bU|+(\bL+\bW)\bigr)-L^0_t\bigl(|\bU|-(\bL
+\bW)\bigr).
\end{equation}
\end{corollary}

%
%
\begin{corollary}\label{cortechnical}
%
%
%
\begin{equation}\label{eqL=0}
\int\I{\bU_t=0}\bigl(dL^0_t\bigl(|\bU|+(\bL+\bW)\bigr
)-dL^0_t\bigl(|\bU|-(\bL+\bW)\bigr)\bigr)=0.
\end{equation}
\end{corollary}
\begin{pf}
We use that for a nonnegative continuous semimartingale $X$, we have
\[
\frac12 L^0_t(X)=\int_0^t \I{X_s=0}\,dX_s.
\]
We apply it for $X=|\bU|+(\bL+\bW)$. Using that $\I{\bU=0}\I
{X=0}=\I{\bU=0}$, we obtain that
\begin{eqnarray*}
&&\int_0^t\I{\bU_s=0}\,dL^0_s\bigl(|\bU|+(\bL+\bW)\bigr)\\
&& \qquad =
2\int_0^t\I{\bU_s=0}\,d\bigl(|\bU|_s+(\bL_s+\bW_s)\bigr)\\
&& \qquad=
L^0_t(|\bU|)+2\int_0^t\I{\bU_s=0}\,d\bL_s+2\int_0^t \I{\bU
_s=0}\,dW_s.
\end{eqnarray*}
Here the last term is zero. This can be seen by using isometry and the fact
that $d\q{U}\approx d\q{W}$.
For the second term use Corollary~\ref{cor2L},
\[
2\int_0^t\I{\bU_s=0}\,d\bL_s=\int_0^t \I{\bU_s=0}\,d\bigl
(L^0_s\bigl(|\bU|+(\bL+\bW)\bigr)-L^0_s\bigl(|\bU|-(\bL
+\bW)\bigr)\bigr).
\]
Rearranging gives that
%
%
%
\begin{equation}\label{eqL0-}
L^0_t(|\bU|)=\int_0^t \I{\bU_s=0}\,dL^0_s\bigl(|\bU|-(\bL
+\bW)\bigr).
\end{equation}
Making the same calculation for
\[
\int_0^t\I{\bU_s=0}\,dL^0_s\bigl(|\bU|-(\bL+\bW)\bigr),
\]
we obtain
%
%
%
\begin{equation}\label{eqL0+}
L^0_t(|\bU|)=\int_0^t \I{\bU_s=0}\,dL^0_s\bigl(|\bU|+(\bL
+\bW)\bigr).
\end{equation}
\eqref{eqL0-} and \eqref{eqL0+} together prove the statement.
\end{pf}

Now, our example is obtained by interlacing the two-dimensional local martingale
$(\bU,\bW)$ from Lemma~\ref{lembZbW} with an independent Brownian motion~$\bB$.
The linkage between the two processes is
\[
\bTV_t=\tfrac{1}{2}\bigl(L^0_{t}(|\bU|+\bL+\bW)+L^0_{t}\bigl
(|\bU|-(\bL+\bW)\bigr)\bigr)
\]
on the one side, and
\[
\bS_t=\max_{s\leq t} \bB_s
\]
on the other side. That is, the processes are time changed so that
after the
time change, $\bTV$ and $\bS$ coincide.
To describe this, put
\begin{eqnarray*}
\alpha(t)&=&
\inf\{u>0\dvtx\bTV_u>\bS_{t-u}\}, \qquad(\TV,L,U,W)_t =(\bTV,\bL
,\bU
,\bW)_{\alpha(t)},\\
\beta(t)&=&\inf\{u>0\dvtx\bS_u>\bTV_{t-u}\}, \qquad(B,S)_t =(\bB
,\bS
)_{\beta(t)}.
\end{eqnarray*}

%
\begin{proposition}\label{prop19}
The following properties hold almost surely:
\begin{longlist}
\item\label{prop19it1} $\alpha$, $\beta$ are nondecreasing, continuous
and
$\alpha(t)+\beta(t)=t$ for all $t\geq0$;
\item\label{prop19it2}
$\lim_{t\to\infty}\alpha(t)=\lim_{t\to\infty}\beta(t)=\infty$;
\item\label{prop19it3} $S=\TV$;
\item\label{prop19it4} for all $t\geq0$, if
$B_t\neq S_t$ then $|L_t+W_t|=|U_t|$.
\end{longlist}
\end{proposition}
\begin{pf} The key property of $\bS$ and $\bTV$ is that they do
not have a
nondegenerate plateau (interval of constancy) at the same level.
The sample path of $\bTV$ is nondecreasing, and therefore $p(\bTV)$
the set
of levels, at which $\bTV$ spends positive amount of time, is at most
countable. The same holds for $\bS$. By the independence of the two
processes, $p(\bTV)$ and $p(\bS)$ are disjoint almost surely.

By the continuity of $\bTV$ and $\bS$, we have
%
%
%
\begin{equation}\label{eqab}
\bTV_{\alpha(t)}=\bS_{t-\alpha(t)}\quad\mbox{and}\quad
\bS_{\beta(t)}=\bTV_{t-\beta(t)}.
\end{equation}
It follows that $\alpha(t)=t- \beta(t)$ almost surely for all $t$. To see
this we can assume on the contrary that $\alpha(t)<t-\beta(t)$. Then
%
\[
\bTV_{\alpha(t)}=\bS_{t-\alpha(t)}\geq\bS_{\beta(t)}=
\bTV_{t-\beta(t)}\geq\bTV_{\alpha(t)},
\]
showing that $\bTV$ and $\bS$ have a nondegenerate plateau at the
same level,
which can happen only on a negligible exceptional event.
Hence $\alpha(t)+\beta(t)=t$ for all $t\geq0$ almost surely.

Since clearly, $\alpha,\beta$ are nondecreasing, the fact that
$\alpha(t)+\beta(t)=t$ implies that they are continuous, even
contractions, that is, $|\alpha(t)-\alpha(s)|\leq|t-s|$
and similarly
for $\beta$. This proves Property (\ref{prop19it1}).\vadjust{\goodbreak}

Property (\ref{prop19it2}) follows from the unboundedness of $\bTV
$ and
$\bS$, cf. (\ref{lembZbWit4}) of Lemma~\ref{lembZbW}.

Property (\ref{prop19it3}) is an easy corollary of \eqref{eqab} and
$\alpha(t)+\beta(t)=t$.

For Property (\ref{prop19it4}) note that if $B_t\neq S_t$
then
$\bB_{\beta(t)}\neq\bS_{\beta(t)}$ and $\bS$ has a nondegenerate
plateau at the
level $S_t$.
But, then $\bTV$ spends zero time at this level, that is, $\alpha(t)$
is a
point of increase of $\bTV$.
Using \eqref{eq2L} this implies that $|\bL+\bW|=|\bU|$
holds at $\alpha(t)$, that is, $|L_t+W_t|=|U_t|$.
\end{pf}

We obtained that $(\alpha(t))\tgo$ and $(\beta(t))\tgo$ are
continuous time
changes with respect to the filtration $(\bF_t)\tgo$ and $(\bG
_t)\tgo$,
respectively, where $\bF_t=\F^{\bU,\bW}_t\vee\sigma(\bB)$ and
$\bG_t=\F^{\bB}_t\vee\sigma(\bU,\bW)$. Then $(U,W)_t=(\bU,\bW
)_{\alpha(t)}$ is
a continuous local martingale in the time changed filtration $\bF
_{\alpha(t)}$,
and since it is clearly adapted to $\F_t=\bF_{\alpha(t)}\cap\bG
_{\beta(t)}$ we
get that $(U,W)$ is a continuous local martingale in $(\F_t)\tgo$. By similar
reasoning, $B_t=\bB_{\beta(t)}$ is also a continuous local martingale
in $(\F_t)\tgo$.

By the definition of $\bTV$ and Corollaries~\ref{cor2L} and \ref
{cortechnical},
we have that
\[
\bL_t=\int_0^t -\sign_0(\bL_s+\bW_s)\,d\bTV_s,
\]
where $\sign_0(x)=\I{x>0}-\I{x<0}$.
Then the same identity holds for the time changed processes, that is,
%
%
%
\begin{equation}\label{eqLTV}
L_t=\int_0^t -\sign_0(L_s+W_s)\,d\TV_s.
\end{equation}

The final step is to define
%
%
%
\begin{equation}\label{eqYVdef}
Y_t=\int_0^t -\sign_0(L_s+W_s)\,dB_s\quad\mbox{and}\quad V=Y+W.
\end{equation}
It is easy to check that $U$ and $V$ are strongly orthogonal, and $U$ is
divergent. Property (\ref{lembZbWit2}) of Lemma~\ref{lembZbW}
is inherited by $U,V$, that is,
$\q{U}_t=\int_0^t Q_s\,d\q{V}_s$ with some
$Q$.
To show that the pair $U,V$ satisfies \eqref{eqUV} we apply the balayage
formula: for a predictable bounded process $\xi$ and a continuous
semimartingale $Z$, we have
\[
\xi_{\gamma(t)} Z_t=\int_0^t \xi_{\gamma(u)}\,dZ_u,
\]
where $\gamma(t)=\sup\{s\leq t\dvtx Z_s=0\}$; see~\cite{MR2200733},
Lemma 0.2, or
\cite{Yor}, Chapter VI. We apply this for $Z=\TV-B$ and
$\xi_t=-\sign_0(L_t+W_t)$, that is,
$\gamma(t)=\sup\{s\leq t\dvtx\TV_s=B_s\}$. Observe that on the interval
$[\gamma(t),t]$ the time change $\alpha$ is constant, hence
$\xi_t=\xi_{\gamma(t)}$ for all $t\geq0$.
Then
\[
L_t-Y_t=\int_0^t \xi_s\,d(\TV_s-B_s)=\xi_t(\TV_t-B_t)=\xi_t(S_t-B_t).
\]
This formula shows that $L_t\neq Y_t$ implies that $S_t\neq B_t$ and hence
$|L_t+W_t|=|U_t|$ by Property (\ref{prop19it4}) of Proposition
\ref{prop19}. That is, if $|V_t|\geq|U_t|$ for some $t$
then either
$Y_t\neq L_t$ and then $|L_t+W_t|=|U_t|$, or $Y_t=L_t$ and we
get that
$|L_t+W_t|\geq|U_t|$. Since $|L+W|\leq|U|$ by the
definition
of $L$, we obtain in both cases that $|L_t+W_t|=|U_t|$. In formula,
\[
\I{|V_t|\geq|U_t|}\leq\I{|L_t+W_t|=|U_t|}
\quad\mbox{and}\quad
\I{|V_t|<|U_t|}\geq\I{|L_t+W_t|<|U_t|}.
\]
Finally, we can write the time-changed version of Proposition~\ref{prop18}
[the time-change $(\alpha(t))\tgo$ is continuous]
\[
dU_t=\I{|L_t+W_t|<|U_t|}\,dU_t=\I{|V_t|<|U_t|}\,dU_t;
\]
that is, \eqref{eqUV} holds.

We can summarize this section in the next theorem.
%
%
\begin{theorem}\label{thm22}
There is a pair $(U,V)$
of strongly orthogonal continuous local martingales such that \eqref{eqUV}
holds, $U,V$ are divergent, and
$d\q{U}$ is absolutely continuous with respect
to $d\q{V}$.
\end{theorem}

For our final statement in this subsection, recall that by \eqref
{eqqNM} when
we reformulate the example in terms of $M$ and $N$, we have
\[
\q{M}=\q{V}=\q{W}+\q{Y}\quad\mbox{and}\quad\q{N}=\q{U}+\q{Y}.
\]
Now, since our construction yields an example in which $d\q{U}$ and
$d\q{W}$ are
equivalent and $\q{U}$, $\q{V}$ are divergent, the same properties
hold for $\q{M}$ and $\q{N}$.
Then, by time change we can transform $(M,N)$ such that $M$ becomes a Brownian
motion and $N$ a continuous local martingale in the time-changed filtration.

%
%
\begin{theorem}\label{thm23}
There is a pair $B,N$ of continuous strongly orthogonal local martingales
such that $B$ is a Brownian motion, $\q{N}_t=\int_0^t Q_s\,ds$ with some
strictly positive $Q$ such that the solution of
\[
dX_t=\sign(X_t)\,dB_t+dN_t
\]
is not pathwise unique.

In other words, if the perturbation of the Tanaka equation is not strong
enough, then pathwise uniqueness of the solution does not hold.
\end{theorem}

The other possibility is that we transform $N$ into a Brownian motion.
Then we
obtain an example showing that in some cases even a Brownian motion is not
strong enough as a perturbation.
%
%
\begin{theorem}\label{thm24}
There is a pair $M,B$ of continuous strongly orthogonal local martingales
such that $B$ is a Brownian motion, $\q{M}_t=\int_0^t Q_s\,ds$ with some
strictly positive $Q$ such that the solution of
\[
dX_t=\sign(X_t)\,dM_t+dB_t
\]
is not pathwise unique.\vadjust{\goodbreak}
\end{theorem}

\subsection{\texorpdfstring{Proof of Lemma \protect\ref{lembZbW}}{Proof of Lemma 17}}\label{sec31}\label{secproofl15}

Lemma~\ref{lembZbW} states the existence of two-dimensional local
martingale
$(U,W)$ 
with essentially
the following property holding almost surely: one
can draw the graph of a continuous function with locally bounded variation
into the plane region
%
%
%
\begin{equation}\label{eqregion}
\{(t,x)\in\real_+\times\real\dvtx-W_t-|U|_t\leq x\leq
-W_t+|U_t|\},
\end{equation}
since this property together with Proposition~\ref{prop16} below ensures
(\ref{lembZbWit3}) of Lem\-ma~\ref{lembZbW}.

To achieve this we start with two independent Brownian motions $\bU$ and~$W$.
Then we apply a time change onto $\bU$ to obtain
$U_t=\bU_{\eta(t)}$.
This time change is in the form
%
%
%
\begin{equation}\label{eqeta}
\eta(t)=\inf\biggl\{s\dvtx\int_0^s |\bU_u|^\kappa du>t\biggr\},
\end{equation}
with a suitably chosen $\kappa>0$.
This way of construction guarantees that (\ref{lembZbWit1}), (\ref
{lembZbWit2}) and even (\ref{lembZbWit4}) of Lemma \ref
{lembZbW} hold.

\renewcommand\thelonglist{\arabic{longlist}}
\renewcommand\labellonglist{(\thelonglist)}
As a result of the time-change the Brownian motion $\bU$
is accelerated when it is near the origin. It has three effects:
\begin{longlist}
\item\label{effect1}
The Hausdorff dimension of the zero level
set $\z(U)$ will decrease below $1/2$.
\item\label{effect2}
Short excursions of $\bU$ after the time change will be
even shorter, and therefore the sum,
which played a crucial role in the proof of Theorem~\ref{thm3},
will be finite, that is,
%
%
%
\begin{equation}\label{eqdW}
\sum_{I\in\cC(U,s)} |\Delta_I W|<\infty\qquad\mbox{almost surely
for all $s\geq0$}.
\end{equation}
\item\label{effect3} To describe the third effect we denote by
$K$ the continuous process
with $K_t=-W_t$ whenever $U_t=0$ and linear in between.

Then, the random closed set $\{t\geq0\dvtx|K_t+W_t|\leq|U_t|\}$
contains in its interior $\z(U)$, the zero level set of $U$, almost
surely. Moreover, if $I$ is a short excursion interval of $U$, then 
$|K+W|\leq|U|$ with high probability.
Then by means of the Borel--Cantelli lemma it follows that
$|K+W|\leq|U|$ on all, but finitely many excursion intervals
ending before $t$, for any $t>0$. That is, the number of exceptional excursion
intervals is locally finite.
\end{longlist}

Properties (\ref{effect1}) and (\ref{effect2}) imply that the
process $K$
defined in (\ref{effect3}) has locally bounded variation. Then property
(\ref{effect3}) implies it is possible to draw a graph of locally bounded
variation into the plain region \eqref{eqregion}: one has to modify
$K$ on
the finitely many exceptional excursion intervals. It is possible since
$|U|\geq\eps$ with some $\eps>0$ on 
the closed set $\cl{A_T}$, where
$A_T=\{t\in[0,T]\dvtx|K_t+W_t|>|U_t|\}$.

So we only have to show that with suitable choice of $\kappa>0$ properties
(\ref{effect1}), (\ref{effect2}) and (\ref{effect3}) are fulfilled.

Property (\ref{effect1}) is a classical fact
(see, e.g.,~\cite{MR0345224}, Section 6.7), where it was proved that
$\dim\z(U)=(2+\kappa)^{-1}$.\vadjust{\goodbreak}

The finiteness of \eqref{eqdW} is a corollary of
\[
\sum_{I\in\cC(U,s)}|I|^{1/2}<\infty\qquad\mbox{for all $s>0$}.
\]
This latter follows from the rather crude estimation on the length of
$I$. If the corresponding excursion interval of $\bU$ is $J$, then
\[
|I|\leq|J| \sup_{s\in J} |\bU_s|^\kappa=
|J|^{1+\kappa/2} \sup_{s\in J} \zfrac{|\bU_s|}{|J|^{1/2}}^\kappa.
\]
Here $\sup_{s\in J}(|\bU_s|/|J|^{1/2})^\kappa$ where
$J$ run
through $\cC(\bU,s)$ is an i.i.d. sequence with finite expectation,
and hence it
is enough to show that
\[
\sum_{J\in\cC(U,s)} |J|^{1/2+\kappa/4}<\infty.
\]
This follows from a trivial modification of Proposition~\ref{lemsum}, as
already mentioned in Remark~\ref{remalpha}.

It remains to show property (\ref{effect3}). In this step
the crucial issue is the estimation of the probability
%
%
%
\begin{equation}\label{eqp}
\mathbf{P}(\exists t\in I_{n}, |K_t+W_t|> |U_t|),
\end{equation}
where $(I_n)_{n\geq1}$ is the usual $\sigma(U)$ measurable
enumeration of the
excursions of $U$.

Let us fix $n$ and drop the index from the notation.
By the definition of $K$ the process $K+W$ is a Brownian bridge on the
interval $I$ and is independent of $U$. Let us map $[0,1]$ onto $I=(a,b)$
linearly by $\phi(t)=t(b-a)+a$ and scale both $K+W$ and $U$ with
$|I|^{-1/2}$. This way we obtain
\begin{eqnarray*}
B_t&=&|I|^{-1/2}\bigl(K_{\phi(t)}+W_{\phi(t)}\bigr) , \\
E_t&=&|I|^{-1/2}|U_{\phi(t)}| .
\end{eqnarray*}
Then $B$ is a standard Brownian bridge, and $E$ is a distorted Brownian
excursion. Now the question is the probability
\[
\mathbf{P}(\exists t\in[0,1], B_t> E_t),
\]
since by symmetry the twice of this probability gives an upper bound for~\eqref{eqp}.
We can describe the graph of the distorted excursion $(E_t)_{t\in
[0,1]}$ in
terms of a standard Brownian excursion $(\bE_t)_{t\in[0,1]}$ and the length
$|J|$ of the excursion interval of $\bU$ which is transformed
after the time
change into $I$. Indeed, the excursion of $\bU$ is obtained by scaling
form $\bE$; that is, its graph can be described as
\[
\{(\bar{a}+|J|t,|J|^{1/2} \bE_t)\dvtx t\in[0,1]\},
\]
where $\bar{a}=\inf J$. To describe the effect of the time-change on the
graph introduce the process
\[
r(t)=\int_0^t |\bE_s|^\kappa ds,\qquad t\in[0,1].
\]
Then $|I|=|J|^{1+\kappa/2} r(1)$, and we can parametrize the
graph of
$E$ as
\[
\biggl\{\biggl(\frac{r(t)}{r(1)},|J|^{-\kappa/4}\frac{\bE
_t}{r(1)^{1/2}}\biggr)\dvtx t\in[0,1]\biggr\}.
\]

Next we define independent variables
\begin{eqnarray*}
\xi&=&\sup_{t\in(0,1)} \frac{B_t}{(t(1-t))^{1/4}},\\
\zeta
&=&\sup_{t\in(0,1)}
\frac{(r(t)(r(1)-r(t)))^{1/4}}{\vphantom{\bar{{\bE}}} \bE_t}=
|J|^{-\kappa/4}\sup_{t\in(0,1)}\frac{(t(1-t))^{1/4}}{E_t} .
\end{eqnarray*}
The point here is that if
$B_{t_0}>E_{t_0}$ for some $t_0\in[0,1]$, then $\xi\zeta|J|^{\kappa/4}>1$.
Whence, by the independence of $\zeta,\xi$ and $|J|$, we have
the next
estimate for the conditional probability,
%
%
%
\begin{equation}\label{eqcondp}
\mathbf{P}(\exists t\in[0,1], B_t>E_t||J|)\leq
\mathbf{P}(\xi\zeta>x)|_{x=|J|^{-\kappa/4}} .
\end{equation}
Hence we are interested in the tail of $\xi$ and $\zeta$.
Although it would be nice to find some explicit formulas, a rather coarse
estimate is sufficient for our purposes. We use that if $B$ is a Brownian
bridge, then $W_t=(1+t)B_{t/(1+t)}$ is a Brownian motion, and
\begin{eqnarray*}
\mathbf{P}(\xi>x)&\leq&2\mathbf{P}\biggl(\sup_{t\in(1/2,1)}\frac
{B_t}{(t(1-t))^{1/4}}>x\biggr)
\\ &\leq&
2 \mathbf{P}\biggl(\exists t\geq0, W_t>\frac12 x(1+t)^{3/4}\biggr).
\end{eqnarray*}
The next lemma shows that the tail of $\xi$ is really thin.
%
%
\begin{lemma}
Let $W$ be a Brownian motion.
Then for $\beta> 1/2$,
\[
\mathbf{P}\bigl(\exists t\geq0, W_t>x(1+t)^\beta\bigr)\leq\frac
{e^{-cx^2}}{1-e^{-cx^2}},
\]
where $c>0$ depends only on $\beta$. For $\beta\in(1/2,1)$ with
$c(\beta)=2\beta
(1-\beta)\* (1/2)^{1/(2\beta-1)}$ the estimate holds.
\end{lemma}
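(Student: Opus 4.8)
I would prove this by the classical peeling/union-bound method: slice the time axis into geometrically growing blocks, on each block bound the probability that $W$ exceeds the (concave) barrier $x(1+t)^\beta$ by the probability that its running maximum exceeds the barrier's value at the left endpoint, estimate each such probability by Doob's inequality for the exponential martingale, and sum the resulting series. The same argument works for every $\beta>1/2$; for $\beta\in(1/2,1)$ it yields a constant at least as large as the one named.

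Fix $x>0$ (for $x\le 0$ the statement is either vacuous or not intended) and put $\gamma=2\beta-1>0$. Choose the ratio $\rho=2^{1/\gamma}$, so $\rho>1$ and $\rho^{\gamma}=2$, and set $t_k=\rho^{k}-1$ for $k\ge0$; thus $1+t_k=\rho^{k}$ and $t_0=0$. Since $W_0=0<x$, the event $\{\exists t\ge0:\ W_t>x(1+t)^\beta\}$ lies in $\bigcup_{k\ge1}A_k$, where $A_k=\{\exists t\in(t_{k-1},t_k]:\ W_t>x(1+t)^\beta\}$. On the $k$-th block $(1+t)^\beta>\rho^{(k-1)\beta}$, so $A_k\subseteq\{\sup_{0\le t\le t_k}W_t>x\rho^{(k-1)\beta}\}$, and Doob's inequality for $e^{\lambda W_t-\lambda^2 t/2}$ (optimized in $\lambda>0$) gives
\begin{equation*}
  \mathbb{P}(A_k)\le e^{-x^2\rho^{2(k-1)\beta}/(2(\rho^k-1))}\le e^{-x^2\rho^{2(k-1)\beta}/(2\rho^k)}.
\end{equation*}
Now $2(k-1)\beta-k=(k-1)\gamma-1$ and $\rho^{\gamma}=2$, so the exponent is $-\tfrac{x^2}{2\rho}2^{\,k-1}$. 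Summing over $k\ge1$ and re-indexing, the probability is at most $\sum_{j\ge0}e^{-(x^2/(2\rho))2^{\,j}}$; using $2^{\,j}\ge j+1$ for all integers $j\ge0$, each term is at most $\bigl(e^{-x^2/(2\rho)}\bigr)^{j+1}$, so the sum is at most $e^{-x^2/(2\rho)}/\bigl(1-e^{-x^2/(2\rho)}\bigr)$. This is exactly the claimed bound with $c=\tfrac1{2\rho}=\tfrac12(1/2)^{1/(2\beta-1)}$. For $\beta\in(1/2,1)$ we have $2\beta(1-\beta)\le\tfrac12$, hence $2\beta(1-\beta)(1/2)^{1/(2\beta-1)}\le c$; as $q\mapsto q/(1-q)$ is increasing and $e^{-c'x^2}\ge e^{-cx^2}$ for $c'\le c$, the estimate with the stated constant $c(\beta)=2\beta(1-\beta)(1/2)^{1/(2\beta-1)}$ follows a fortiori.

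I expect no real difficulty. The one design choice that matters is taking $\rho$ with $\rho^{\gamma}$ equal to the integer $2$: this is precisely what converts the super-geometric tail $\sum_j e^{-A2^{\,j}}$ into something dominated by an honest geometric series, and hence gives the closed form $q/(1-q)$ rather than a mere convergent bound. Everything else --- the exponent identity $2(k-1)\beta-k=(k-1)\gamma-1$, the inequality $2^{\,j}\ge j+1$, and the monotonicity step passing to the weaker named constant --- is routine. An alternative is to substitute $W_t=(1+t)B_{t/(1+t)}$ and study a Brownian bridge crossing the barrier $x(1-s)^{1-\beta}$ on $[0,1)$, but the geometric peeling is the shorter route.
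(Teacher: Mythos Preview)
Your proof is correct and follows the same overall peeling/union-bound strategy as the paper, but the two implementations differ in a genuine way. The paper partitions with $t_k=(k+1)^{1/(2\beta-1)}-1$, replaces the curved barrier $x(1+t)^\beta$ on each block by the \emph{secant line} $e_k(t)=a_kt+b_k$ (using concavity, hence the preliminary reduction to $\beta\in(1/2,1)$), and then applies the exact linear-boundary formula $\mathbb P(\exists t\ge0:\ W_t\ge a_kt+b_k)=e^{-2a_kb_k}$; the choice of $t_k$ makes $2a_kb_k\ge c(\beta)(k+1)$ so the series is geometric from the outset. You instead take geometric blocks $1+t_k=\rho^k$ with $\rho^{2\beta-1}=2$, use only monotonicity to drop the barrier to its value at the left endpoint, and bound by Doob's maximal inequality; this gives a super-geometric tail $\sum_j e^{-A2^j}$ which you then dominate by a geometric one via $2^j\ge j+1$. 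Your route is more elementary (no linear-boundary formula, no concavity, hence no reduction step for $\beta\ge1$) and in fact yields the sharper constant $c=\tfrac12(1/2)^{1/(2\beta-1)}\ge c(\beta)$, which you correctly weaken to the stated one. The paper's route, on the other hand, lands directly on the named constant without any slack.
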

\begin{pf}It is enough to prove for $\beta\in(1/2,1)$.
Take an increasing sequence $(t_n)_{n\geq0}$ such that $t_0=0$ and
$\lim_{n\to\infty}t_n=\infty$. Let $e_k$ denote the secant line
through $t_k,t_{k+1}$, that is,
\[
e_k(t)=\frac{f(t_{k+1})-f(t_k)}{t_{k+1}-t_k} (t-t_k)+f(t_k)=a_kt+b_k ,\vadjust{\goodbreak}
\]
where $f(t)=(1+t)^\beta$. Since $e_k(t)\leq f(t)$ for $t\in
[t_k,t_{k+1}]$ we
have that
\[
\mathbf{P}\bigl(\exists t\geq0, W_t\geq xf(t)\bigr)\leq\sum
_{k=0}^\infty
\mathbf{P}\bigl(\exists t\geq0, W_t\geq xe_k(t)\bigr)=\sum
_{k=0}^\infty
e^{-2x^2a_kb_k}.
\]
In the last step we have used that for the Brownian motion $W$, and $x,y>0$,
we have $\mathbf{P}(\exists t\geq0,W_t\geq x+yt)=e^{-2xy}$;
see, for example, (1) on page 251 of~\cite{MR1912205}.

To finish the proof we need to esimate $a_kb_k$ from below, where
\begin{eqnarray*}
a_k&=&\frac{f(t_{k+1})-f(t_k)}{t_{k+1}-t_k}\geq
f'(t_{k+1})=\beta(1+t_{k+1})^{\beta-1},\\
b_k&=&f(t_k)-t_k\frac{f(t_{k+1})-f(t_k)}{t_{k+1}-t_k} 
\geq
t_k\biggl(\frac{f(t_k)}{t_k}-f'(t_k)\biggr) \\
&=&
t_k
(1+t_k)^{\beta-1}\biggl(1+\frac1{t_k}-\beta\biggr)
\geq
(1-\beta)(1+t_k)^{\beta}.
\end{eqnarray*}
Hence
\[
a_kb_k\geq\beta
(1-\beta) (1+t_k)^\beta(1+t_{k+1})^{\beta-1}\geq
\beta
(1-\beta)\frac{(1+t_{k})^{2\beta}}{1+t_{k+1}}.
\]
Taking $t_k=(k+1)^{1/(2\beta-1)}-1$, we get that $a_kb_k\geq
(k+1)\beta
(1-\beta)(1/2)^{1/(2\beta-1)}$ and
\[
\mathbf{P}\bigl(\exists t\geq0, W_t\geq xf(t)\bigr)\leq\sum
_{k=0}^\infty
e^{-2x^2a_kb_k}\leq
\frac{e^{-cx^2}}{1-e^{-cx^2}}
\]
with $c(\beta)=2\beta
(1-\beta)(1/2)^{1/(2\beta-1)}$.
\end{pf}

%
%
\begin{corollary}
There are $c_1,c_2>0$ such that
\[
\mathbf{P}(\xi>x)\leq c_1e^{-c_2 x^2}\quad\mbox{and}\quad
\mathbf{P}(\bxi>x)\leq c_1e^{-c_2 x^2},
\]
where $\bxi=\sup_{t\in(0,1)} (t(1-t))^{-1/4}\bE_t$.
\end{corollary}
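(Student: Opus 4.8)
The plan is to obtain the estimate on $\xi$ essentially for free from the material preceding the Lemma, and then to reduce the estimate on $\bxi$ to the one on $\xi$ via the Bessel bridge description of the normalised Brownian excursion.

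For $\xi$, recall that the computation carried out just before the Lemma already yields $\P{\xi>x}\le 2\,\P{\exists t\ge 0,\ W_t>\tfrac12 x(1+t)^{3/4}}$ for a Brownian motion $W$. I would apply the preceding Lemma with $\beta=3/4$ and with $x/2$ in place of $x$; since $c(3/4)>0$, this gives $\P{\xi>x}\le 2e^{-cx^2/4}/(1-e^{-cx^2/4})$ with $c=c(3/4)$. For $x$ beyond the point where $c(x/2)^2\ge\log 2$ the denominator is at least $1/2$, so $\P{\xi>x}\le 4e^{-cx^2/4}$; on the bounded remaining range the trivial bound $\P{\xi>x}\le 1$ is absorbed by enlarging the prefactor. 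Hence the first inequality holds with some $c_1$ and $c_2=c/4$ (which in particular shows $\xi<\infty$ a.s.).

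For $\bxi$ I would invoke the classical identification of the normalised Brownian excursion with the three--dimensional Bessel bridge from $0$ to $0$, which is the radial part of a three--dimensional standard Brownian bridge (see \cite{Yor}). Thus write $\bE_t=(\,(b^{(1)}_t)^2+(b^{(2)}_t)^2+(b^{(3)}_t)^2\,)^{1/2}$ with $b^{(1)},b^{(2)},b^{(3)}$ independent standard Brownian bridges on $[0,1]$ from $0$ to $0$, and put $w(t)=(t(1-t))^{-1/4}$. Then $(w(t)\bE_t)^2=\sum_{i=1}^{3}(w(t)b^{(i)}_t)^2\le \xi_1^2+\xi_2^2+\xi_3^2$ for every $t$, where $\xi_i:=\sup_{t\in(0,1)}w(t)|b^{(i)}_t|$ are i.i.d., so taking the supremum over $t$ gives $\bxi\le(\xi_1^2+\xi_2^2+\xi_3^2)^{1/2}$. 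Since a Brownian bridge has the same law as its negative, $\P{\xi_i>x}\le 2\,\P{\xi>x}$ with $\xi$ as above, and a union bound combined with the first part gives $\P{\bxi>x}\le\sum_{i=1}^3\P{\xi_i>x/\sqrt3}\le 6\,\P{\xi>x/\sqrt3}\le 6c_1 e^{-c_2 x^2/3}$; relabelling the constants yields the second inequality.

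The only genuinely non-routine input is the excursion / Bessel bridge identity; the rest is the preceding Lemma together with union bounds and the sign- and time-reversal symmetries of the Brownian bridge, so there is no serious computational obstacle. One could instead try to compare the law of the excursion on $[\delta,1-\delta]$ with that of a Bessel$(3)$ process and let $\delta\downarrow 0$, but the relevant Radon--Nikodym density degenerates at the endpoints, so I would prefer the Bessel bridge representation, which handles the whole interval $(0,1)$ at once.
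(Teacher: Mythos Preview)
Your proof is correct and follows essentially the same route as the paper: for $\xi$ you invoke the preceding Lemma with $\beta=3/4$, and for $\bxi$ you use the identification of the normalised excursion with a three--dimensional Bessel bridge to write $\bE^2$ as a sum of squares of three independent Brownian bridges, reducing to the bound for $\xi$. The paper's justification of the Corollary is terse (it just records the identity $\bE^2\eqinlaw B^2(1)+B^2(2)+B^2(3)$ and leaves the rest implicit), and your union bound and symmetry arguments fill in exactly the details the paper omits.
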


The estimation for the standard Brownian excursion $\bE$ follows from the
description of $\bE$ as a three-dimensional Bessel bridge; that is,
$\rho_{t}=(1+t)\bE_{t/(1+t)}$ is a three-dimensional Bessel process starting
from zero; see~\cite{Yor}, XII, Theorem 4.2. Then, it follows that
$\bE^2\eqinlaw B^2(1)+B^2(2)+B^2(3)$ where $B(1),B(2),B(3)$ are three
independent Brownian bridges. This explains the second part of the corollary.

We will also use the well-known fact about the three-dimensional Bessel
process $\rho$, that
\[
J=\frac{\inf\{\rho_t\dvtx t\geq1\}}{\rho_1}
\]
is independent of $\sigma(\{\rho_s\dvtx s\leq1\})$
and uniformly distributed on $[0,1]$. Formulating this with $\bE$ and
$\bE_{1-t}$ we obtain
that
\[
J_1= \frac{1}{2\bE_{1/2}}\cdot\min_{t\in[\fraca12,1)}\frac{\bE
_{t}}{1-t}
\quad\mbox{and}\quad
J_2= \frac{1}{2\bE_{1/2}}\cdot\min_{t\in(0,\fraca12]}\frac{\bE
_{t}}{t}
\]
are uniformly distributed on $[0,1]$, and $J_1,J_2,\bE_{1/2}$ are independent.

Using these tools we want to estimate
\[
\mathbf{P}(\zeta>x)=\mathbf{P}\biggl(\sup_{t\in(0,1)}\frac
{(r(t)(r(1)-r(t)))^{1/4}}{\bE_t}>x\biggr).
\]

With the notation of the previous corollary,
\[
r(t)\bigl(r(1)-r(t)\bigr)\leq{\bxi}^{2\kappa} \bigl(t\wedge
(1-t)\bigr)^{1+\kappa/4}.
\]
For the denominator we have the following lower bound:
\begin{eqnarray*}
\bE_t & \geq&\biggl(t \cdot\min_{t\in(0,\fraca12]} \frac{\bE
_t}{t}\biggr)\wedge
\biggl((1-t) \cdot\min_{t\in[\fraca12,1)} \frac{\bE_t}{1-t}\biggr
)
\\ & \geq&
2\bigl(t\wedge(1-t)\bigr)\bE_{1/2}(J_1\wedge J_2).
\end{eqnarray*}
Thus for $\kappa\geq12$,
\[
\zeta\leq\frac{\bxi^{\kappa/2}}{2\bE_{1/2}(J_1\wedge J_2)}.
\]

The tail of $\bxi$ and $\xi$ goes to zero exponentially fast, while
on the other
hand
the tail of $(\bE_{1/2}(J_1\wedge J_2))^{-1}$ is polynomial, more precisely,
\begin{eqnarray*}
&&\mathbf{P}\biggl(\frac1{\bE_{1/2}(J_1\wedge J_2)}>x\biggr)\\
&& \qquad\leq\mathbf{P}(\bE_{1/2}<x^{-1/2})+\mathbf{P}(J_1\wedge
J_2<x^{-1/2})\leq
c_3 x^{-1/2}
\end{eqnarray*}
with some positive $c_3$. So we obtain that
%
%
%
\begin{equation}\label{eqhalf+eps}
\mathbf{P}(\xi\zeta>x)\leq c(\eps) x^{-1/2+\eps},
\end{equation}
where $\eps>0$ arbitrary small, and $c(\eps)$ is a positive constant depending
on $\eps$.

Combining \eqref{eqhalf+eps} 
with \eqref{eqcondp} and taking into account Remark \ref
{remalpha}, we get
\[
\mathbf{P}\Bigl(\Bigl\{I\in\cC(U,s)\dvtx\sup_{t\in I}
|K_t+W_t|-|U_t|>0\Bigr\}
\mbox{ is finite}\Bigr)=1\qquad\mbox{for all $s>0$},
\]
that is, the number of excursion intervals of $U$ on which $|K+W|\leq
|U|$ does not hold, is locally finite almost surely, provided that
$\kappa\geq12$.
This proves property (\ref{effect3}) completely.

The next proposition showing the extremal property of $L$ finishes the proof
of Lemma~\ref{lembZbW}.
%
%
\begin{proposition}\label{prop16}
Assume that $f,g,h\dvtx[0,\infty)\to\real$ are continuous
functions, satisfying
$f\leq h\leq g$ and $f(0)=g(0)$.
Then, for any $t\geq0$ the total variation of $L=\bL(f,g)$ on $[0,t]$
is not greater than that of $h$.
\end{proposition}
\begin{pf}
Take $t\geq0$ and a subdivision $t_0=0<t_1<\cdots<t_n=t$. It is
enough to
show that there is a subdivision $s_0=0<s_1<\cdots<s_m=t$ such that
\[
\sum_{j=1}^n |L(t_j)-L(t_{j-1})|\leq
\sum_{j=1}^m |h(s_j)-h(s_{j-1})|.
\]
We may and do assume that the sign of the increments $L(t_j)-L(t_{j-1})$
is alternating on the left. We can simply leave out those $t_j$ at which
the sign of the increments does not alternate without affecting the
left-hand side.

The case $n=1$ and $L(t)=L(0)=0$ is trivial. In all other cases the
increments $L(t_{j})-L(t_{j-1})$, $j=1,\ldots,n$ are nonzero.

If $L(t_{j})-L(t_{j-1})>0$, then there is $s_j\in[t_{j-1},t_{j}]$ such
that
$L(t_j)=f(s_j)\leq h(s_j)$; similarly if $L(t_{j})-L(t_{j-1})<0$, then
there
is $s_j\in[t_{j-1},t_{j}]$ such that $L(t_j)=g(s_j)\geq h(s_j)$.
Defining
$s_0=0$ and $s_{n+1}=t$ we get $|L(t_{j})-L(t_{j-1})|\leq
|h(s_{j})-h(s_{j-1})|$ for $j=1,\ldots,n$ and the statement follows.
\end{pf}

\section*{Acknowledgments}
The author is grateful to Walter Schachermayer for encouraging this
work and
to the anonymous referee whose comments helped to improve the paper.


%

\printaddresses


\begin{thebibliography}{10}

\bibitem{MR2428716}
%
\begin{barticle}[mr]
\bauthor{\bsnm{Banner},~\bfnm{Adrian~D.}\binits{A.~D.}} \AND
\bauthor{\bsnm{Ghomrasni},~\bfnm{Raouf}\binits{R.}}
(\byear{2008}).
\btitle{Local times of ranked continuous semimartingales}.
\bjournal{Stochastic Process. Appl.}
\bvolume{118}
\bpages{1244--1253}.
\bid{doi={10.1016/j.spa.2007.08.001}, issn={0304-4149}, mr={2428716}}
\bptok{imsref}%
\end{barticle}
%
\endbibitem

\bibitem{MR1912205}
%
\begin{bbook}[mr]
\bauthor{\bsnm{Borodin},~\bfnm{Andrei~N.}\binits{A.~N.}} \AND
\bauthor{\bsnm{Salminen},~\bfnm{Paavo}\binits{P.}}
(\byear{2002}).
\btitle{Handbook of {B}rownian Motion---Facts and Formulae},
\bedition{2nd} ed.
\bpublisher{Birkh\"auser}, \baddress{Basel}.
\bid{mr={1912205}}
\bptok{imsref}%
\end{bbook}
%
\endbibitem

\bibitem{MR0345224}
%
\begin{bbook}[mr]
\bauthor{\bsnm{It{\^o}},~\bfnm{Kiyosi}\binits{K.}} \AND
\bauthor{\bsnm{McKean},~\bfnm{Henry~P.}\binits{H.~P.}, \bsuffix{Jr.}}
(\byear{1974}).
\btitle{Diffusion Processes and Their Sample Paths},
\bedition{2nd} ed.
\bseries{Die Grundlehren der mathematischen
 Wissenschaften} \bvolume{125}.
\bpublisher{Springer}, \baddress{Berlin}.
\bid{mr={0345224}}
\bptok{imsref}%
\end{bbook}
%
\endbibitem

\bibitem{Kallenberg}
%
\begin{bbook}[mr]
\bauthor{\bsnm{Kallenberg},~\bfnm{Olav}\binits{O.}}
(\byear{2002}).
\btitle{Foundations of Modern Probability},
\bedition{2nd} ed.
\bpublisher{Springer}, \baddress{New York}.
\bid{mr={1876169}}
\bptok{imsref}%
\end{bbook}
%
\endbibitem

\bibitem{MR2349573}
%
\begin{barticle}[mr]
\bauthor{\bsnm{Kruk},~\bfnm{Lukasz}\binits{L.}},
\bauthor{\bsnm{Lehoczky},~\bfnm{John}\binits{J.}},
\bauthor{\bsnm{Ramanan},~\bfnm{Kavita}\binits{K.}} \AND
\bauthor{\bsnm{Shreve},~\bfnm{Steven}\binits{S.}}
(\byear{2007}).
\btitle{An explicit formula for the {S}korokhod map on {$[0,a]$}}.
\bjournal{Ann. Probab.}
\bvolume{35}
\bpages{1740--1768}.
\bid{doi={10.1214/009117906000000890}, issn={0091-1798}, mr={2349573}}
\bptok{imsref}%
\end{barticle}
%
\endbibitem

\bibitem{MR2200733}
%
\begin{bbook}[mr]
\bauthor{\bsnm{Mansuy},~\bfnm{Roger}\binits{R.}} \AND
\bauthor{\bsnm{Yor},~\bfnm{Marc}\binits{M.}}
(\byear{2006}).
\btitle{Random Times and Enlargements of Filtrations in a {B}rownian Setting}.
\bseries{Lecture Notes in Math.}
\bvolume{1873}.
\bpublisher{Springer}, \baddress{Berlin}.
\bid{mr={2200733}}
\bptok{imsref}%
\end{bbook}
%
\endbibitem

\bibitem{drift2009}
%
\begin{barticle}[mr]
\bauthor{\bsnm{Prokaj},~\bfnm{Vilmos}\binits{V.}},
\bauthor{\bsnm{R{\'a}sonyi},~\bfnm{Mikl{\'o}s}\binits{M.}} \AND
\bauthor{\bsnm{Schachermayer},~\bfnm{Walter}\binits{W.}}
(\byear{2011}).
\btitle{Hiding a constant drift}.
\bjournal{Ann. Inst. Henri Poincar\'e Probab. Stat.}
\bvolume{47}
\bpages{498--514}.
\bid{doi={10.1214/10-AIHP363}, issn={0246-0203}, mr={2814420}}
\bptok{imsref}%
\end{barticle}
%
\endbibitem

\bibitem{drift2009a}
%
\begin{barticle}[auto:STB|2012/03/09|09:07:40]
\bauthor{\bsnm{Prokaj},~\bfnm{V.}\binits{V.}} \AND
\bauthor{\bsnm{Schachermayer},~\bfnm{W.}\binits{W.}}
(\byear{2010}).
\btitle{Hiding a constant drift---A strong solution}.
\bjournal{Illinois J. Math.}
\bvolume{54}
\bpages{1463--1480}.
\bid{mr={2981856}}
\bptok{imsref}%
\end{barticle}
%
\endbibitem

\bibitem{Yor}
%
\begin{bbook}[mr]
\bauthor{\bsnm{Revuz},~\bfnm{Daniel}\binits{D.}} \AND
\bauthor{\bsnm{Yor},~\bfnm{Marc}\binits{M.}}
(\byear{1999}).
\btitle{Continuous Martingales and {B}rownian Motion},
\bedition{3rd} ed.
\bseries{Grundlehren der Mathematischen Wissenschaften [Fundamental Principles
of Mathematical Sciences]}
\bvolume{293}.
\bpublisher{Springer}, \baddress{Berlin}.
\bid{mr={1725357}}
\bptok{imsref}%
\end{bbook}
%
\endbibitem

\end{thebibliography}
\end{document}